\documentclass[english
              ,twoside
              ]{amsart}
\usepackage[T1]{fontenc}
\usepackage{lmodern}
\usepackage[utf8]{inputenc}
\usepackage{babel}
\usepackage{hyperref}
\usepackage{cas-math}
\usepackage{cas-paper}
\usepackage[arrow,cmtip,matrix]{xy}

\newcommand{\urlprefix}{}

\newcommand{\subclass}[1]{}

\DeclareMathOperator{\End}{End}

\newcommand{\sSG}{\overline{SG}}


\begin{document}

\title{Paths of homomorphisms from stable Kneser graphs}

\author{Carsten Schultz}
\address{Institut für Mathematik, MA 6-2\\
Technische Universität Berlin\\
D-10623 Berlin, \hbox{Germany}}
\email{carsten@codimi.de}
\date{2nd June 2010}

\begin{abstract}
We denote by $SG_{n,k}$ the stable Kneser graph (Schrijver graph) of
stable $n$-subsets of a set of cardinality~$2n+k$.  For
$k\equiv3\pmod4$ and $n\ge2$ we show that there is a component of the
$\chi$-colouring graph of~$SG_{n,k}$ which is invariant under the
action of the automorphism group of~$SG_{n,k}$.  We derive that there
is a graph $G$ with $\chi(G)=\chi(SG_{n,k})$ such that the complex
$\Hom(SG_{n,k}, G)$ is non-empty and connected. In particular,
 for $k\equiv3\pmod4$ and $n\ge2$ the graph $SG_{n,k}$ is not a test graph.
\end{abstract}

\maketitle

\section{Introduction}
For graphs $G$ and~$H$, the complex $\Hom(G, H)$ is a cell complex
whose vertices are the graph homomorphisms from $G$ to~$H$ and whose
topology captures global properties of the set of these homomorphisms.
Research on these complexes in recent years has been driven by the
concept of a test graph.  In this work we present a result in this
area which can be formulated naturally in the category of graphs
without mentioning complexes.
\begin{def*}[\cite{babson-kozlov-i}]
A graph~$T$ is a \emph{test graph} if for all graphs~$G$ and $r\ge0$
such that the cell complex $\Hom(T, G)$ is $(r-1)$-connected we have
$\chi(G)\ge r+\chi(T)$.
\end{def*}
A cell complex is said to be $0$-connected if it is non-empty.  If
there is a graph homomorphism from $T$ to~$G$, then the chromatic
number of~$G$ is at least as large as that of~$T$.
Therefore the condition for~$r=0$ is always satisfied.

A cell complex is said to be $1$-connected if it is non-empty and
path-connected.  If $f,g\colon T\to G$ are graph homomorphism, then by
definition there is an edge from $f$ to~$g$ in $\Hom(T, G)$ if and
only if $f$ and~$g$ differ at exactly one vertex of~$T$.  This is all
that is needed to understand the condition for~$r=1$, and since that
is all that we will be interested in in this work, we omit the
description of the higher dimensional cells of~$\Hom(T, G)$.

The seminal result regarding test graphs is that $K_2$~is a test
graph.  This is a translation of a result by Lov\'asz~\cite{lovasz}
which predates the definiton of the complex~$\Hom(T, G)$.  While some
effort has gone into proving that certain graphs are test
graphs~\cite{babson-kozlov-i,babson-kozlov-ii,hom-loop,anton-cas,sg},
it was not known from the beginning if possibly every graph is a test
graph.  An example of a graph which is not a test graph was given by
Hoory and Linial
.
\begin{thm*}[\cite{linial-test-graph}]
There is a graph $T$ such that $\Hom(T, K_{\chi(T)})$ is connected.
\end{thm*}
The graph~$T$ in this example fails to be a test graph
already for~$r=1$.  Since $\Hom(T, K_{\chi(T)})$ is non-empty by
definition, we would otherwise have~$\chi(K_{\chi(T)})\ge \chi(T)+1$,
which is absurd.

So this graph~$T$ fails to be a test graph in the most fundamental
way.  Furthermore, the fact that $\Hom(T, K_{\chi(T)})$ is connected also
implies
that $\Hom(K_2, T)$ cannot be $(\chi(T)-3)$-connected, i.e.~the
chromatic number of~$T$ is not detected by the test graph~$K_2$. 
The proof of this uses some easy topology and functorial properties of~$\Hom$.
One might therefore ask if there are graphs~$T$ whose chromatic numbers
are detected by $K_2$ and which still fail to be test graphs
for~$r=1$.  We will find such examples among the stable Kneser graphs.

The stable Kneser graphs, first introduced by Schrijver,
form a two parameter
family of graphs~$SG_{n,k}$,
see \prettyref{def:sg}.  We list
some facts that are known of them.
\begin{itemize}
\item
The chromatic number of $SG_{n,k}$ equals~$k+2$
(Schrijver~\cite{schrijver}, extending work of Lovasz~\cite{lovasz}
and B{\'a}r{\'a}ny~\cite{barany78}).
\item Indeed, $\Hom(K_2, SG_{n,k})$ is homotopy equivalent to a $k$-sphere
as shown by Björner and de~Longueville~\cite{anders-mark}
(a simplified proof can be found in~\cite{sg}).
\item
The graph~$SG_{n,k}$ is vertex critical, i.e.~every induced subgraph
on a proper subset of its set of vertices is
$(k+1)$-colourable~\cite{schrijver}.
\item The stable Kneser graph~$SG_{1,k}$ is a complete graph and hence
  a test graph by work of Babson and Kozlov~\cite{babson-kozlov-i}.
  The stable Kneser graph $SG_{n,1}$ is a cycle on $2n+1$
  vertices and hence also a test graph by their
  work~\cite{babson-kozlov-ii}.
\item
For $n\ge2$ and $k\ge1$, Braun~\cite{braun-sg} has shown the
automorphism group of~$SG_{n,k}$ to be the symmetry group of a
$(2n+k)$-gon.
\end{itemize}
Since $\Hom(K_2, SG_{n,k})$ is topologically as nice as one might
hope, one might have thought that all stable Kneser graphs are test
graphs.  It turns out, however, that very few of them are.
\begin{thm*}[{\cite[10.5--10.10]{sg}}]
If $k\notin\set{0,1,2,4,8}$ then there is an $N(k)$ such that for all
$n\ge N(k)$ the graph~$SG_{n,k}$ is not a test graph. For
$k\equiv3\pmod4$ the graph~$SG_{n,k}$ fails to be a test graph for~$r=1$.
\end{thm*}
The proof of this theorem studies the action of the automorphism group
of~$SG_{n,k}$ on the space~$\Hom(K_2, SG_{n,k})$ using methods from
algebraic topology.  The current work gives an elementary proof for
the case~$k\equiv3\pmod4$, which yields the stronger result that in
these cases we can actually set~$N(k)=2$.  Its main result is thus the
following.
\begin{thm*}
Let $k\equiv3\pmod4$ and $n\ge2$.  There is a graph $G$ with
$\chi(G)=\chi(SG_{n,k})=k+2$ and such that $\Hom(SG_{n,k}, G)$ is
non-empty and connected.
\end{thm*}
In the proof, which is the combination of the following two theorems,
the automorphism group of~$SG_{n,k}$ again plays an important role.
However, we only have to study its action on the set of path
components of $\Hom(SG_{n,k}, K_{k+2})$.  While we know from the above
discussion that the complex $\Hom(SG_{n,k}, K_{k+2})$ cannot be
connected, we will show in \prettyref{sec:sg} the following weaker result.
\begin{thm*}[\ref{thm:sg}]
Let $k\equiv3\pmod4$ and $n\ge2$.
Then there is a component of $\Hom(SG_{n,k}, K_{k+2})$ which is
invariant under the action of the automorphism group of
$SG_{n,k}$.
\end{thm*}
This of course relies on Braun's result on the structure of
$\Aut(SG_{n,k})$.  In \prettyref{sec:crit} we give a self-contained
proof the following general criterion from~\cite{sg}.
\begin{thm*}[\ref{thm:crit}]
Let $T$ be a finite, vertex critical graph. If there is a
component of $\Hom(T, K_{\chi(T)})$ which is $\Aut(T)$-invariant, then
there exists a graph~$G$ such that $\Hom(T, G)$ is non-empty and
connected and $\chi(G)=\chi(T)$.
\end{thm*}
That every endomorphism of $SG_{n,k}$ is an automorphism follows
immediately from vertex criticality.  In this sense, our proof also
relies on Schrijver's result that $SG_{n,k}$ is vertex crtitical.

\section{Constructions in the category of graphs}
We recall some definitions related to the category of graphs.  Details
can be found in~\cite{anton-graph-cat}.

A graph $G$ consists of a \emph{vertex set} $V(G)$ and a symmetric
binary relation $E(G)\subset V(G)\times V(G)$.  The relation is called
\emph{adjacency}, adjacent vertices are also called \emph{neighbours}
and elements of $E(G)$ \emph{edges}.  We also write $u\nbo v$ for
$(u,v)\in E(G)$.  We point out that we allow loops, i.e.~edges of the
form $(v,v)$.  A graph with a loop at every vertex is 
called reflexive.

A \emph{graph homomorphism} $f\colon G\to H$ is a function $f\colon
V(G)\to V(H)$ between the vertex sets which preserves the adjacency
relation, $(f(u),f(v))\in E(H)$ for all $(u,v)\in E(G)$.  When
discussing the structure of the set of graph homomorphisms from $G$
to~$H$, it will be useful to not only consider the cell complex
$\Hom(G, H)$, but also the closely related graph~$[G, H]$.  This
graph, sometimes also written~$H^G$, is defined by
\begin{align*}
V([G,H])&=V(H)^{V(G)},\\
E([G,H])&=\set{(f,g)\colon 
\text{$(f(u),g(v))\in E(H)$ for all $(u,v)\in E(G)$}}.
\end{align*}
In particular, we have $f\nbo f$ if and only if $f$~is a graph homomorphism.
Furthermore, we have the following.
\begin{lem}
Let $f,g\colon G\to H$ be graph homomorphisms and $G$~loopless.  Then
$f\nbo g$ in~$[G,H]$ if and only if each $h\colon V(G)\to V(H)$ with
$h(u)\in\set{f(u),g(u)}$ for all $u\in V(G)$ is a graph homomorphism.
\qed
\end{lem}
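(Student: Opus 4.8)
The plan is to unwind the definition of the edge relation of~$[G,H]$ and then verify the two implications by a short case check. First I would record that, by definition, $f\nbo g$ in~$[G,H]$ means exactly that $(f(u),g(v))\in E(H)$ for every edge~$(u,v)$ of~$G$; since $E(G)$ and~$E(H)$ are symmetric, this is equivalent to demanding in addition that $(g(u),f(v))\in E(H)$ for all such~$(u,v)$. I will use this reformulation in both directions.

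For the direction from the pointwise condition to adjacency in~$[G,H]$, I would fix an edge~$(u,v)$ of~$G$. This is the one place where looplessness enters: because $G$ has no loops, $u\neq v$, so the map~$h$ that agrees with~$f$ on $V(G)\setminus\set{v}$ and sends~$v$ to~$g(v)$ is well defined and has $h(w)\in\set{f(w),g(w)}$ for every~$w$. By hypothesis $h$ is a graph homomorphism, and evaluating it on the edge~$(u,v)$ yields $(f(u),g(v))=(h(u),h(v))\in E(H)$. As $(u,v)$ was arbitrary, this gives $f\nbo g$.

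Conversely, assuming $f\nbo g$, I would take any $h\colon V(G)\to V(H)$ with $h(u)\in\set{f(u),g(u)}$ for all~$u$ and check that $(h(u),h(v))\in E(H)$ for each edge~$(u,v)$ of~$G$ by distinguishing four cases according to the values of~$h$ at~$u$ and~$v$. If $h$ picks the $f$-value at both endpoints, then $(h(u),h(v))=(f(u),f(v))\in E(H)$ since $f$ is a homomorphism, and symmetrically if it picks the $g$-value at both; in the two mixed cases $(h(u),h(v))$ equals $(f(u),g(v))$ or $(g(u),f(v))$, both in~$E(H)$ by the reformulated adjacency condition. Hence $h$ is a graph homomorphism. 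I do not expect any genuine obstacle here: the argument is a two-line case analysis, and the only subtlety worth stating explicitly is that looplessness of~$G$ is used precisely once — to ensure $u\neq v$, so that $f$ and~$g$ may be combined freely at the two endpoints of an edge — and is needed only for the first of the two implications.
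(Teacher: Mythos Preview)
Your proof is correct. The paper does not supply a proof of this lemma at all---it is stated with a bare \qed---so there is no argument to compare against; your detailed unwinding of the definition and four-case check is exactly the routine verification the author is asking the reader to carry out, and your remark that looplessness is used precisely to separate the two endpoints of an edge in the first implication is on point.
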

It follows that $f\nbo g$ if an edge joins the two graph homomorphism
$f$ and~$g$ in $\Hom(G, H)$.  If on the other hand $f\nbo g$, then we
can get from $f$ to~$g$ in $\Hom(G, H)$ by changing the values at the
vertices of~$G$ in any order.  Therefore, for questions of
connectivity it does not matter whether we work in $\Hom(G, H)$ or in
the induced subgraph of looped vertices of $[G, H]$.  That two graph
homomorphisms are in the same component can now be reformulated as
follows.
\begin{defn}
For $n\ge0$ we define a reflexive graph $I_n$ by
$V(I_n)=\set{0,\dots,n}$, $i\nbo j\iff \abs{i-j}\le1$.  For graph
homomorphisms $f,g\colon G\to H$ we write $f\homot g$ if and only if
there is an $n\ge0$ and a graph homomorphism $p\colon I_n\to[G,H]$
with $p(0)=f$, $p(n)=g$.
\end{defn}

The construction $[\bullet,\bullet]$ is an \emph{inner hom},
intimately related to products.  The product of two graphs in the
category~$\mathcal G$ of graphs and graph homomorphisms is given by
\begin{align*}
V(G\times H)&=V(G)\times V(H),\\
E(G\times H)&=
\set{((u,u'),(v,v'))\colon 
 \text{$(u,v)\in E(G)$, $(u',v')\in E(H)$}}.
\end{align*}
For every graph $G$, the functor $[G, \bullet]$ is a right adjoint to
$\bullet\times G$, i.e. there is a natural equivalence
\begin{equation}\label{eq:adj}
\mathcal{G}(Z\times G, H)\isom\mathcal{G}(Z, [G,H]).
\end{equation}
For example, that looped vertices of $[G,H]$ correspond to graph
homomorphisms can be derived as a formal consequence of this
adjunction.  If $\ug$ denotes the terminal object of~$\mathcal
G$, we obtain
\[\mathcal G(G, H)
\isom\mathcal G(\ug\times G, H)
\isom\mathcal G(\ug, [G, H]),
\]
and since $\ug$ is a graph consisting of one vertex and one loop, the
graph homomorphisms from $\ug$ to some other graph correspond to the
looped vertices of that graph.

\section{A criterion for not being a test graph}\label{sec:crit}
If $T$ is a vertex critical finite graph, then every graph
homomorphism $T\to T$ will have to be surjective and hence bijective.
In other words, every endomorphism of~$T$ is an automorphism.  For
graphs with this property we obtain the following criterion to decide
whether they satisfy the test graph property for~$r=1$.

\begin{thm}\label{thm:crit}
Let $T$ be a finite graph such that $\End(T)=\Aut(T)$.  If there is a
component of $\Hom(T, K_{\chi(T)})$ which is $\Aut(T)$-invariant, then
there exists a graph~$G$ with $\chi(G)=\chi(T)$ such that $\Hom(T, G)$
is non-empty and connected.
\end{thm}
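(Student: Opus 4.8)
Write $c:=\chi(T)$. The plan is to build a loopless graph $G$ that ``models'' the component $C$: one wants $\Hom(T,G)$ to be connected (ideally, homotopy equivalent to $C$) while $\chi(G)$ is forced to equal~$c$.

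First I would pass to the inner-hom graph $[T,K_c]$ and let $D$ be its connected component containing the colourings that constitute $C$. Two observations set things up: the looped vertices of $D$ are exactly the colourings in $C$ (a looped vertex of $[T,K_c]$ is a homomorphism $T\to K_c$, and once it lies in $D$ it is $\homot$-connected to $C$), and $D$ is invariant under the precomposition action of $\Aut(T)$ on $[T,K_c]$, since $C$ is and automorphisms permute components. I would then obtain $G$ from $D$ by a ``twisting'' that removes the loops but retains a homomorphism to $K_c$: the outcome should be a loopless $G$ together with a homomorphism $\gamma\colon G\to K_c$ (so $\chi(G)\le c$) and, for a fixed $\phi_0\in C$, a distinguished homomorphism $\eta_0\colon T\to G$ built from $\phi_0$ (so $\Hom(T,G)\neq\emptyset$, whence $\chi(G)\ge\chi(T)=c$ and $\chi(G)=c$). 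The point of the twist is that moving around a loop of colourings in $C$ should act on the fibres of $G$ over $C$ by the monodromy of that loop.

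It then remains to show $\Hom(T,G)$ is connected, i.e.\ that every homomorphism $\eta'\colon T\to G$ satisfies $\eta'\homot\eta_0$. From the adjacency of $G$ one reads off that $\gamma\circ\eta'$ is a proper colouring and that the ``$D$-part'' of $\eta'$ packages into a homomorphism out of $T$ which --- here $\End(T)=\Aut(T)$ enters --- must be governed by an automorphism $\sigma$ of $T$; this yields a colouring $\Phi$ with $\gamma\circ\eta'=\Phi\circ\sigma$. I would then homotope $\eta'$ to $\eta_0$ in three stages: (i) straighten the $D$-coordinate to a constant value $\Phi$; (ii) absorb the automorphism, turning $\sigma$ into the identity while replacing $\Phi$ by $\Phi\circ\sigma$, by dragging the $D$-coordinate once around a loop of colourings in $C$ whose monodromy is $\sigma$ --- such a loop lives inside $C$ precisely because $C$ is $\Aut(T)$-invariant and connected; (iii) ride a path of colourings in $C$ from $\Phi\circ\sigma$ down to $\phi_0$, arriving at $\eta_0$.

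The hard part will be the interplay between the construction and step (ii). The edges of $G$ must be loose enough that loops in $C$ really do realise, by monodromy, every automorphism $\sigma$ of $T$ that arises from some $\eta'$ --- otherwise a nontrivial $\sigma$ cannot be cancelled and $\Hom(T,G)$ stays disconnected; yet they must be tight enough that $G$ admits \emph{no} homomorphism $\eta'$ whose composite $\gamma\circ\eta'$ lands in a component of $\Hom(T,K_c)$ other than $C$ --- such an $\eta'$ would either disconnect $\Hom(T,G)$ (because $\gamma$ induces a continuous map $\Hom(T,G)\to\Hom(T,K_c)$) or push $\chi(G)$ above $c$. Striking this balance, and checking along the way that the extracted colouring $\Phi$ does lie in $C$, is exactly where both hypotheses --- $\End(T)=\Aut(T)$ and the $\Aut(T)$-invariance of $C$ --- are genuinely needed.
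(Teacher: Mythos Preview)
Your outline has the right shape but a genuine gap: you never actually construct~$G$. Everything hinges on the ``twisting that removes the loops,'' and you acknowledge in your final paragraph that making this precise is the hard part --- but that \emph{is} the proof. In particular, your step~(ii) speaks of ``a loop of colourings in $C$ whose monodromy is $\sigma$''; but the $\Aut(T)$-invariance of $C$ gives you \emph{paths} from $\phi$ to $\phi\sigma$, not loops, and there is no pre-existing monodromy representation on $C$ to appeal to. The loops, and the monodromy, have to be \emph{built into} $G$ by the construction you have not supplied. Your worry that $\gamma\circ\eta'$ might land outside~$C$ is also a symptom of not having $G$ in hand: with the right construction this problem simply does not arise.

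The paper's construction is direct and avoids working with all of $D$. Fix $c\in C$ and, for each $\gamma\in\Aut(T)$, a path $I_{n_\gamma}\to[T,K_{\chi(T)}]$ from $c$ to $c\gamma$; assemble these into a map $X\to[T,K_{\chi(T)}]$ where $X$ is a reflexive star with one arm per $\gamma$, ending at a vertex $v_\gamma$. Adjunction gives $f\colon X\times T\to K_{\chi(T)}$ with $f(v_\gamma,t)=f(u,\gamma(t))$; set $G=(X\times T)/{\sim}$ with $(v_\gamma,t)\sim(u,\gamma(t))$. Then $\bar f\colon G\to K_{\chi(T)}$ and the inclusion $j\colon T\to G$ at $u$ give $\chi(G)=\chi(T)$. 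For connectedness, the quotient $G\to X/{\sim'}$ is a bouquet of circles; since $T$ is finite and the arms may be taken long, any $g\colon T\to G$ misses a vertex on each circle and hence lifts to $X'\times T$ with $X'$ a reflexive tree. Writing the lift as $(\tilde g_1,\tilde g_2)$, one contracts $\tilde g_1$ to the basepoint (tree) and recognises $\tilde g_2\in\End(T)=\Aut(T)$, so $g\homot j\gamma\homot j$. Your steps (i)--(iii) are the shadow of this argument, but the lifting via ``long enough circles'' is the mechanism that makes the decomposition of $\eta'$ into a colouring and an automorphism actually work.
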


\begin{rem}
This is the case $r=1$, $s=\chi(T)+1$ of the implication
$\text{(iv)}\Rightarrow\text{(ii)}$ of \cite[Thm~10.1]{sg}.  The proof
there uses results from \cite{DocUni} and \cite{anton-cas}.  Here we
are only interested in dimension~$1$, and for this easier case we can
give a self-contained proof which follows the same lines.
\end{rem}

\begin{rem}
The converse holds in general without conditions on~$T$. If $f\colon
T\to G$ and $c\colon G\to K_{\chi(T)}$, then $c\cmps f$ is a vertex of
$\Hom(T, K_{\chi(T)})$, and if $\Hom(T, G)$ is connected, then the
component of that vertex will be invariant under~$\Aut(T)$:  If
$\gamma\in\Aut(T)$ then $f\cmps\gamma\homot f$ implies $c\cmps
f\cmps\gamma\homot c\cmps f$.
\end{rem}

\begin{proof}[Proof of \prettyref{thm:crit}]
Let $c\colon T\to K_{\chi(T)}$ be a colouring which lies in the
invariant component.  Then for every $\gamma\in\Aut(T)$ there is an
$n\ge0$ and a graph homomorphism $I_n\to[T, K_{\chi(T)}]$ with
$0\mapsto c$ and $n\mapsto c\gamma$.  We can assemble these into a
graph homomorphism $X\to[T, K_{\chi(T)}]$, where $X$ is a reflexive
graph consisting of a vertex~$u$ and for every $\gamma\in\Aut(T)$ a
path from $u$ to a vertex~$v_\gamma$ and the graph homomorphism such
that $u\mapsto c$ and $v_\gamma\mapsto c\gamma$.  We will later
require that the paths have a certain minimal length, which we can
arrange.

The graph homomorphism which we have just constructed gives rise via
\prettyref{eq:adj} to a
graph homomorphism $f\colon X\times T\to K_{\chi(T)}$ with
$f(v_\gamma,t)=c(\gamma(t))=f(u,\gamma(t))$ for all $\gamma\in\Aut(T)$
and $t\in V(T)$.  We define an equivalence relation on $V(X\times T)$
such that $(v_\gamma, t)\eqrel (u,\gamma(t))$ and obtain a commutative
diagram
\[\xymatrix{
X\times T\ar[r]^f\ar@{->>}[d]^q
&K_{\chi(T)}\\
G\deq(X\times T)/\eqrel
\ar[ru]_{\bar f}
}\]
with $q$ the quotient map.  Now let $j\colon T\to G$ be the inclusion
at~$u$, more formally $j=q\cmps(\const_u,\id_T)$.  Now we already know
that $\Hom(T, G)\ne\emptyset$ and $\chi(G)=\chi(T)$.  We will proceed
to show that $\Hom(T, G)$ is connected.  

We first note that $\const_u\homot\const_{v_\gamma}\colon T\to X$ for
all~$\gamma\in\Aut(T)$, and hence 
\[j\homot q\cmps(\const_{v_\gamma},\id_T)
=q\cmps(\const_u,\gamma)=j\gamma.\] Now let $g\colon T\to G$ be an
arbitrary graph homomorphism.  Let $\eqrel'$ be the equivalence
relation on~$V(X)$ given by $v_\gamma\eqrel' u$.  The reflexive
graph~$X/\eqrel'$ is a bouquet of circles, and there is a natural surjection
$G\to X/\eqrel'$.  Since we may assume to haven chosen the graph~$X$
large enough (this depending on~$T$, not on~$g$), the image of the
composition $T\xto g G\to X/\eqrel'$ will miss at least one vertex of
each of the circles.  Let $X'$ be the graph obtained from $X/\eqrel'$
by removing one these vertices from each circle.  We denote the vertex
corresponding to~$u$ by~$u'$.  The preimage of $X'$ in~$G$ is
isomorphic to $X'\times T$.  Let $h\colon X'\times T\to G$ be the
corresponding embedding such that $j=h\cmps(\const_{u'},\id_T)$.  
This defines a commutative diagram
\[\xymatrix{
& X'\times T \ar[d]^h 
\\ 
T\ar[ur]^{\tilde g} \ar[r]^g & G.  
}\] 

Now $\tilde g=(\tilde g_1,\tilde g_2)$.  Since $X'$ is a reflexive
tree with
loops, we have $\tilde g_1\homot\const_{u'}$.  Since
$\End(T)=\Aut(T)$, there is a $\gamma\in\Aut(T)$ such that $\tilde
g_2=\gamma$.  Therefore 
\[g\homot h\cmps(\const_{u'},\gamma)=h\cmps(\const_{u'},\id_T)\cmps\gamma
=j\gamma\homot j.
\]
Since $g$ was arbitrary, this shows that $\Hom(T, G)$ is connected.
\end{proof}

\section{Paths of colourings of stable Kneser graphs}\label{sec:sg}

\begin{defn}\label{def:sg}
Let $n\ge1$, $k\ge0$ and $m=2n+k$.  The \emph{Kneser graph} $KG_{n,k}$
is the graph whose vertices are the $n$-element subsets of
$\set{0,\dots,m-1}$ and in which two of them are adjacent if and only
if they are disjoint.
We call a subset $S$ of
$\set{0,\dots,m-1}$ \emph{semi-stable}, if $\set{i,i+1}\not\subset S$
for all~$0\le i\le m-2$, and \emph{stable}, if additionally
$\set{0,m-1}\not\subset S$.  
The \emph{stable Kneser graph}  $SG_{n,k}$ 
is the induced subgraph of $KG_{n,k}$ on the set of stable sets.
The \emph{semi-stable Kneser graph} $\sSG_{n,k}$ 
is the induced subgraph of $KG_{n,k}$ on the set of semi-stable sets.
\end{defn}

\begin{defn}
We call the graph homomorphism
\begin{align*}
c_{n,k}\colon\sSG_{n,k}&\to K_{k+2},\\
S&\mapsto \min S
\end{align*}
the \emph{canonical colouring of $\sSG_{n,k}$} and its restriction the 
\emph{canonical colouring of $SG_{n,k}$}.
\end{defn}

We will prove the following.
\begin{thm}\label{thm:sg}
Let $k\equiv3\pmod4$ and $n\ge2$.  Then for any
$\gamma\in\Aut(SG_{n,k})$ and $c\colon SG_{n,k}\to K_{k+2}$ the
canonical colouring there is a path in $\Hom(SG_{n,k})$ from $c$ to
$c\gamma$.
\end{thm}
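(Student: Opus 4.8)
The plan is to reduce the problem to two ingredients: a description of $\Aut(SG_{n,k})$ by Braun, which says the automorphism group is generated by the rotation $\rho$ and the reflection of the $(2n+k)$-gon acting on $\set{0,\dots,m-1}$; and an explicit construction of paths in $\Hom(SG_{n,k}, K_{k+2})$ (equivalently, paths in the looped subgraph of $[SG_{n,k}, K_{k+2}]$) connecting the canonical colouring $c=c_{n,k}$ to $c\gamma$ for the generators $\gamma$. Since $\homot$ is an equivalence relation compatible with right composition (if $c\homot c\gamma$ and $c\homot c\delta$ then $c\gamma\homot c\gamma\delta$, as left-composing a path by nothing and using $c\gamma\homot c\cdot(\gamma\delta)$ follows by applying the path for $\delta$ after precomposing with $\gamma$), it suffices to prove $c\homot c\gamma$ for $\gamma$ ranging over a generating set. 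So the real content is: (i) handle the rotation $\rho$, and (ii) handle one reflection $\sigma$.

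For the reflection I expect the argument to be comparatively cheap. The canonical colouring sends $S\mapsto\min S$; precomposing with a reflection $\sigma$ that fixes $0$ essentially reverses the order, and one should be able to write down a short interpolating sequence of colourings $S\mapsto\min S$, through intermediate maps of the form $S\mapsto \min(S\cap B)$ for suitable ``windows'' $B$, or by an explicit sequence of single-vertex changes, verifying at each step that the Hom-graph adjacency condition holds (every vertex of $SG_{n,k}$ meets any set of the form we use in a controlled way). The hard part — and where the hypothesis $k\equiv 3\pmod 4$ and $n\ge 2$ must enter — is the rotation. Here the idea is to ``rotate the colouring by one step'': build a path from $c$ to $c\rho$ where $c\rho(S)=\min(\rho^{-1}S)$, i.e. the colouring that labels a set by its minimum after shifting all coordinates by one. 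One cannot do this in a single move; instead one threads through $K_{k+2}$ by temporarily recolouring the sets whose minimum is the ``wrap-around'' element, and the parity condition $k\equiv 3\pmod4$ is exactly what allows the necessary permutation of colour classes to be realised by a loop in $[SG_{n,k},K_{k+2}]$ rather than getting obstructed (morally, the $k$-sphere $\Hom(K_2,SG_{n,k})$ carries a $\mathbb Z_2$-action whose relevant characteristic class vanishes precisely in this congruence class, and that vanishing is what is being made elementary here).

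Concretely I would proceed as follows. First, reformulate $c\homot c\gamma$ as the existence of a walk in the looped subgraph of $[SG_{n,k},K_{k+2}]$, and record Braun's generators of $\Aut(SG_{n,k})$. Second, dispose of the reflection by an explicit finite sequence of colourings, each obtained from the previous by changing the value on a single vertex (or small set of vertices simultaneously, justified by the Lemma on loopless $G$), checking the edge condition $(f(U),g(V))\in E(K_{k+2})$, i.e. $f(U)\ne g(V)$, for all disjoint stable $U,V$ — this is the routine verification I will not grind through. Third, and this is the main obstacle, construct the rotation path: I expect to need an auxiliary family of colourings indexed by a parameter tracking ``how much of the rotation has been performed'', use the $n\ge 2$ hypothesis to guarantee enough room among the $2n+k$ coordinates to keep successive colourings adjacent, and use $k\equiv 3\pmod 4$ to close up the construction (the number of colour swaps needed in the wrap-around step must be even in a suitable sense). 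Finally, combine: $c\homot c\rho$ and $c\homot c\sigma$, hence $c\homot c\gamma$ for every $\gamma$ in the group they generate, which by Braun is all of $\Aut(SG_{n,k})$, and by the remark that $\homot$ is an equivalence relation closed under right translation this gives $c\homot c\gamma$ for arbitrary $\gamma$, as claimed. I anticipate that verifying adjacency throughout the rotation path — keeping track of which pairs of disjoint stable sets could collide in colour — will be the most delicate bookkeeping, and that isolating the precise point where $k\equiv3\pmod4$ is forced will be the conceptual crux.
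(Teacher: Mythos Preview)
Your overall scaffold---use Braun's theorem to reduce to generators $\tau$ (rotation) and $\rho$ (reflection), then connect $c$ to $c\tau$ and $c$ to $c\rho$ separately---matches the paper. But the substance of your plan diverges from the paper's argument in a way that leaves a real gap.

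The paper does \emph{not} build the paths for the two generators by ad~hoc interpolation. Instead it factors each path into two pieces. First (Lemma~\ref{lem:ds}), a single adjacency in $[SG_{n,k},K_{k+2}]$: one checks directly that $c\tau\sim\bar\tau c$ and $c\rho\sim\bar\rho c$, where $\bar\tau,\bar\rho\in S_{k+2}$ are the colour permutations $x\mapsto x+1$ and $x\mapsto k-x$ (mod $k+2$). Second (Proposition~\ref{prop:sg}), a general fact proved by induction on~$k$ via the semi-stable Kneser graphs $\sSG_{n,k}$: for \emph{every} even permutation $\pi\in A_{k+2}$ one has $c\simeq\pi c$. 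The hypothesis $k\equiv3\pmod4$ then enters purely as a sign computation: $\operatorname{sign}\bar\tau=(-1)^{k+1}$ and $\operatorname{sign}\bar\rho=(-1)^{k(k+1)/2}$ are both~$+1$ exactly when $k\equiv3\pmod4$. The hypothesis $n\ge2$ is used only in the base case $k=1$ of Proposition~\ref{prop:sg}, where one needs the explicit $5$-cycle $SG_{2,1}$.

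Your proposal lacks this decomposition, and without it the plan is not executable as written. You describe the reflection as ``comparatively cheap'' and locate the mod-$4$ obstruction in the rotation, but in the paper's framework it is the other way round: the rotation only needs $k$ odd (a $(k{+}2)$-cycle is even), while the reflection contributes the genuinely mod-$4$ condition. Your sketch of the rotation path (``auxiliary family of colourings indexed by a parameter'', ``wrap-around step must be even in a suitable sense'') is essentially a restatement of the goal; the concrete mechanism---that post-composing $c$ with any $3$-cycle $(i\ i{+}1\ i{+}2)$ stays in the same component, proved inductively by restricting to sub-$\sSG$'s---is absent. Until you isolate a statement like Proposition~\ref{prop:sg} and prove it, the ``delicate bookkeeping'' you anticipate has no organizing principle and will not close.
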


We consider automorphisms of~$K_{k+2}$ before turning to automorphisms
of~$SG_{n,k}$.

\begin{prop}\label{prop:sg}
Let $n\ge2$, $k\ge1$.  Let $\pi\in A_{k+2}$ be an even permutation of
the vertices of $K_{k+2}$.  Then 
$c_{n,k}\homot\pi\cmps c_{n,k}\colon\sSG_{n,k}\to K_{k+2}$.
\end{prop}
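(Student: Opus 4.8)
The plan is to combine a group-theoretic reduction, an induction on $k$ that peels off one colour class of $\sSG_{n,k}$, and one genuinely combinatorial step. First, put $H=\set{\pi\in\Aut(K_{k+2}):c_{n,k}\homot\pi\cmps c_{n,k}}$. Post-composition with a fixed $\pi\in\Aut(K_{k+2})$ is a graph automorphism of $[\sSG_{n,k},K_{k+2}]$, so applying it to a path witnessing $c_{n,k}\homot\rho\cmps c_{n,k}$ produces a path from $\pi\cmps c_{n,k}$ to $\pi\cmps\rho\cmps c_{n,k}$; concatenating with a path witnessing $c_{n,k}\homot\pi\cmps c_{n,k}$ shows $H$ is closed under multiplication, and the same device with $\pi^{-1}$ gives closure under inverses. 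Thus $H$ is a subgroup of $\Aut(K_{k+2})$, and it suffices to exhibit inside $H$ a generating set of $A_{k+2}$.

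The inductive input comes from splitting $\sSG_{n,k}$ according to whether a vertex contains $0$. The vertices containing $0$ are pairwise non-disjoint, hence form an independent set, all of colour $0$; removing $0$ and subtracting $2$ from the remaining elements identifies this set with $V(\sSG_{n-1,k})$. The vertices avoiding $0$, after subtracting $1$ from every element, form an induced copy of $\sSG_{n,k-1}$, on which $c_{n,k}$ restricts to $c_{n,k-1}$ with every colour shifted up by $1$. Crucially, \emph{any} proper colouring of this copy using only the colours $\set{1,\dots,k+1}$ extends — by the constant value $0$ on the independent part — to a proper colouring of $\sSG_{n,k}$ (the only edges not already in the copy run between the two parts, and there one side is coloured $0$ and the other is not), and a path of such colourings lifts to a path in $[\sSG_{n,k},K_{k+2}]$. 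Feeding in the statement for $(n,k-1)$ therefore puts every even permutation fixing $0$ into $H$, i.e. $H\supseteq A_{k+1}$ (the copy of $A_{k+1}$ stabilising the colour $0$ inside $A_{k+2}$). Since $A_{k+1}$ has index $k+2$ in $A_{k+2}$ and, for $k+2\ge5$, no proper subgroup of $A_{k+2}$ has smaller index, adjoining to $A_{k+1}$ any single $3$-cycle that moves $0$ generates all of $A_{k+2}$; the same is checked directly for $k+2\in\set{3,4}$. So the entire statement reduces to the following, which for $k=1$ is the base of the induction: \emph{for all $n\ge2$, $k\ge1$ there is a $3$-cycle $\tau$ moving the colour $0$ with $c_{n,k}\homot\tau\cmps c_{n,k}$.}

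For this residual step I would work in $[\sSG_{n,k},K_{k+2}]$ directly. By the Lemma, from a proper colouring one may recolour a single vertex $S_0$ to any colour missing from the current colours on $N(S_0)$, and each such single-vertex recolouring is an edge of $[\sSG_{n,k},K_{k+2}]$; so it is enough to realise $\tau=(0\,a\,b)$ by a finite sequence of recolourings touching only the classes $c_{n,k}^{-1}(0),c_{n,k}^{-1}(a),c_{n,k}^{-1}(b)$. Recolouring each affected vertex exactly once (class $0\to a$, class $a\to b$, class $b\to0$) is legal, in an appropriate order, precisely when the orientation $D$ of the tripartite graph $\sSG_{n,k}[c_{n,k}^{-1}\set{0,a,b}]$ obtained by pointing every $a$–$0$ edge at its $0$-end, every $b$–$a$ edge at its $a$-end and every $0$–$b$ edge at its $b$-end is acyclic; one then recolours along a topological order of $D$, and every intermediate colouring is automatically proper because a colour class is independent. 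Taking $\tau=(0\,k\,(k{+}1))$, the class $c_{n,k}^{-1}(k+1)$ is the single vertex $S^{*}=\set{k+1,k+3,\dots,2n+k-1}$; every directed cycle of $D$ then has the form $S^{*}\to V\to V'\to S^{*}$ with $V$ the unique $k$-coloured neighbour $\set{k,k+2,\dots,2n+k-2}$ of $S^{*}$ and $V'$ a $0$-coloured common neighbour of $V$ and $S^{*}$, and since $V$ already contains $k+2,k+4,\dots,2n+k-2$ such a $V'$ would be a semi-stable $n$-set inside $\set{0,\dots,k-1}$ — impossible as soon as $k\le2n-2$. So in that range $D$ is acyclic and the step is done.

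The regime $k\ge2n-1$ — which already includes the smallest case $n=2$, $k=3$ of the intended application — is where the real difficulty lies, and the one-pass argument cannot survive it: once $k$ is large, $\sSG_{n,k}$ contains triangles realising essentially every triple of colours, so no $3$-cycle $\tau$ can be implemented by recolouring each vertex only once. There one has several spare colours and must recolour in several passes, temporarily parking vertices on an auxiliary colour, and the delicate point is to order the moves so that \emph{every} intermediate colouring stays proper. The natural way to organise this is a second induction, on $n$ with base $n=2$, again using the "contains $0$" decomposition to strip off the independent colour class and reduce to $\sSG_{n-1,k}$. I regard carrying out this last construction uniformly in $n$ and $k$ as the main obstacle of the proof.
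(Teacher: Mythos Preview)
Your subgroup reduction and the induction peeling off colour~$0$ are correct and essentially parallel the paper's treatment of the generators $(i\;i{+}1\;i{+}2)$ for $i>0$. Your acyclic-orientation argument for $(0\;k\;k{+}1)$ is also valid in the range $k\le 2n-2$ and in particular settles the base case $k=1$ for every $n\ge2$, without the explicit table the paper writes down. But as you yourself say, the proof is incomplete: you have no construction of a $3$-cycle moving colour~$0$ once $k\ge 2n-1$, and that regime contains almost all the cases of interest (for the application one needs every $k\equiv 3\pmod 4$ with $n\ge2$). Your suggested fallback, a second induction on~$n$ via the ``contains~$0$'' decomposition, is not promising as stated: the part containing~$0$ is an independent set, so no smaller instance of the proposition sits on it, and lowering~$n$ with $k$ fixed only pushes you further into the bad regime $k\ge 2n-1$.

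The idea you are missing is that the $(0\;1\;2)$ case also reduces by induction on~$k$, via a \emph{second} decomposition that peels off the top colour instead of the bottom one. Set
\[
c'(S)=\begin{cases}k+1,& m-1\in S,\\ \min S,&\text{otherwise.}\end{cases}
\]
Then $c'\nbo c_{n,k}$ (two adjacent vertices cannot both contain $m-1$), and the vertices with $m-1\notin S$ form an induced copy of $\sSG_{n,k-1}$ on which $c'$ restricts to $c_{n,k-1}$ \emph{without any shift of colours}. By induction one realises $(0\;1\;2)$ on that copy; extending by the constant colour $k+1$ on the rest gives $c'\homot(0\;1\;2)\cmps c'$, hence $c_{n,k}\homot(0\;1\;2)\cmps c_{n,k}$. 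This works for every $k>1$ uniformly in~$n$ and reduces everything to $k=1$, where your own acyclic-orientation step (or the paper's explicit six-step path on the $5$-cycle $SG_{2,1}$, to which $\sSG_{n,1}$ retracts) finishes the argument.
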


\begin{proof}
Let $m=2n+k$.
We will assume that $\pi$ is a cycle of the form 
$(i\; i{+}1\;i{+}2)$ with
$0\le i<k$.  This is possible, since these permutations
generate~$A_{k+2}$.  The proof will be by induction on~$k$.  We
distinguish three cases.

\underline{$i>0$.}  We note that $c(S)\ge i$ if and only if
$S\subset\set{i,\dots,m-1}$.  The induced subgraph on the set of these
vertices is isomorphic to $\sSG_{n,k-i}$ via $S\mapsto S-i$.
Therefore a path from $c_{n,k-i}$ to $(0\;1\;2)\cmps c_{m,k-i}$, which
exists by induction, can be extended to a path from $c_{n,k}$ to
$\pi\cmps c_{n,k}$ by fixing all colours less than~$i$.

\underline{$i=0$, $k>1$.}
We define
\begin{align*}
c'\colon \sSG_{n,k}&\to K_{k+2},\\
S&\mapsto
\begin{cases}
k+1,&m-1\in S,\\
\min S,&\text{otherwise}.
\end{cases}
\end{align*}
Obviously $c'$ is a graph homomorphism.  If $S\nbo S'$ then
$m-1\notin\emptyset=S\intersect S'$ and hence $c_{n,k}(S)=c'(S)$ or
$c_{n,k}(S')=c'(S')$.  This shows~$c'\nbo c_{n,k}$.
The induced subgraph on those $S$ for which
$c'(S)<k+1$ equals $\sSG_{n,k-1}$. The restriction of $c'$ to that
subgraph equals $c_{n,k-1}$.  Therefore a path from $c_{n,k-1}$ to
$\pi\cmps c_{n,k-1}$ extends to one from $c'$ to $\pi\cmps
c'$.  Hence $c_{n,k}\nbo c'\homot\pi\cmps c'\nbo\pi\cmps c_{n,k}$.

\underline{$i=0$, $k=1$.}  There is a unique function $h\colon
V(\sSG_{n,1})\to V(SG_{2,1})$ with
$h(S)\intersect\set{0,1,2,3}=S\intersect\set{0,1,2,3}$ for all~$S$.
It satisfies $h(S)\subset S$ and is therefore a graph homomorphism.
Also, $c_{n,1}=c_{2,1}\cmps h$.  It therefore suffices to show that
$c_{2,1}\homot (0\;1\;2)\cmps c_{2,1}$, where $c_{2,1}$ is defined on
$SG_{2,1}$.  We note that $SG_{2,1}$ is a cycle of length~$5$. 
Homomorphisms between cycles have
been studied in more detail by {\v{C}}uki{\'c} and
Koylov~\cite{sonja-dmitry-cycles}.  For our needs the following explicit
construction suffices.
\[
\begin{tabular}{c|c|c|c|c}
$\set{0,3}$&$\set{1,4}$&$\set{0,2}$&$\set{1,3}$&$\set{2,4}$\\
\hline
$0$&$1$&$0$&$1$&$2$\\
$0$&$2$&$0$&$1$&$2$\\
$1$&$2$&$0$&$1$&$2$\\
$1$&$2$&$0$&$1$&$0$\\
$1$&$2$&$0$&$2$&$0$\\
$1$&$2$&$1$&$2$&$0$\\
\end{tabular}
\]
Each row of the table defines a graph homomorphism $SG_{2,1}\to K_3$.
Adjacent rows define homomorphisms which are adjacent in~$[SG_{2,1}, K_3]$.  
\end{proof}

\begin{lem}\label{lem:ds}
Let $n\ge2$, $k\ge1$, $m=2n+k$.  Let $\tau,\rho\in\Aut(SG_{n,k})$ be defined by
\begin{align*}
\tau S&=S+1,
&
\rho S &= k-S,
\intertext{with arithmetic modulo~$m$, and  
$\bar\tau,\bar\rho\in\Aut(K_{k+2})$ by}
\bar\tau x&=x+1,
&
\bar\rho x&=k-x,
\end{align*}
with arithmetic modulo~$k+2$.  Then $c\tau\nbo \bar\tau c$ and
$c\rho\nbo\bar\rho c$, where $c$~is the canonical colouring.
\end{lem}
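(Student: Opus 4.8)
The plan is to work throughout in the graph $[SG_{n,k},K_{k+2}]$ and to unwind the relation $\nbo$: since $K_{k+2}$ has no loops, two colourings $f,g\colon SG_{n,k}\to K_{k+2}$ satisfy $f\nbo g$ exactly when $f(S)\ne g(S')$ for every pair of disjoint stable $n$-sets $S,S'$. (A stable set is just a subset of $\set{0,\dots,m-1}$ with no two cyclically consecutive elements, which is why the cyclic rotation $\tau$ and the reflection $\rho$ are automorphisms.) So it is enough to verify, for all disjoint stable $S,S'$, the inequalities $c(\tau S)\ne\bar\tau(c(S'))$ and $c(\rho S)\ne\bar\rho(c(S'))$. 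I would first describe $c\tau$ and $c\rho$ explicitly and then read each inequality off from the disjointness of $S$ and $S'$. Used in both cases is the remark that $\min S=k+1$ holds for exactly one stable set, $S_{\max}=\set{k+1,k+3,\dots,m-1}$ (because $\set{k+1,\dots,m-1}$ has exactly $2n-1$ elements), and that $\min S\le k$ for every other stable set.

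For $\tau$: if $m-1\notin S$ then $\tau S=\set{s+1:s\in S}$, so $c(\tau S)=\min S+1\le k+1$; if $m-1\in S$ then $0\in\tau S$, so $c(\tau S)=0$. On the other side $\bar\tau(c(S'))=(\min S'+1)\bmod(k+2)$, and this equals $0$ exactly when $S'=S_{\max}$. Now let $S,S'$ be disjoint. If $m-1\in S$, then $S'\ne S_{\max}$ since $m-1\in S_{\max}$, so $\bar\tau(c(S'))\ne0=c(\tau S)$. If $m-1\notin S$, then $c(\tau S)=\min S+1\ge1$; hence if $S'=S_{\max}$ we are done, and otherwise $\bar\tau(c(S'))=\min S'+1$, with $\min S\ne\min S'$ because $\min S\in S$ and $\min S'\in S'$. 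Either way $c(\tau S)\ne\bar\tau(c(S'))$.

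For $\rho$: the bijection $x\mapsto(k-x)\bmod m$ reverses the order of the block $\set{0,\dots,k}$ and, separately, reverses the order of the block $\set{k+1,\dots,m-1}$. Hence if $S$ meets $\set{0,\dots,k}$ then $c(\rho S)=k-j$ where $j=\max(S\cap\set{0,\dots,k})\in\set{0,\dots,k}$, while if $S$ misses $\set{0,\dots,k}$ then $S=S_{\max}$, $\rho$ fixes $S_{\max}$, and $c(\rho S)=k+1$. On the other side $\bar\rho(c(S'))=(k-\min S')\bmod(k+2)$ equals $k-\min S'\le k$ when $S'\ne S_{\max}$ and equals $k+1$ when $S'=S_{\max}$. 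Now let $S,S'$ be disjoint. If $S=S_{\max}$ then $S'\ne S_{\max}$, so $\bar\rho(c(S'))\le k<k+1=c(\rho S)$. If instead $S$ meets $\set{0,\dots,k}$, then $c(\rho S)=k-j\le k$; if $S'=S_{\max}$ this is smaller than $k+1=\bar\rho(c(S'))$, and if $S'\ne S_{\max}$ then $\bar\rho(c(S'))=k-\min S'$ with $j\ne\min S'$ because $j\in S$ and $\min S'\in S'$. In all cases $c(\rho S)\ne\bar\rho(c(S'))$.

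The work is essentially bookkeeping: the only things requiring care are the two-block description of $\rho$ and the wrap-around of residues modulo $k+2$, the latter being controlled entirely by the observation that $S_{\max}$ is the unique stable set with minimum $k+1$. I do not expect a genuine obstacle here.
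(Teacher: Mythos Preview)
Your argument is correct and follows essentially the same route as the paper: for $\tau$ you split on whether $m-1\in S$, and for $\rho$ you use the two-block description of $x\mapsto k-x\pmod m$ together with the uniqueness of $S_{\max}$, which is exactly what the paper does (the paper's ``$k-i\in S\cap S'$'' is your $j=\min S'$ after the substitution $i=k-j$). The only stylistic difference is that the paper compresses the $\tau$-case by noting $c\tau(S)=\bar\tau c(S)$ whenever $m-1\notin S$ and then appealing to the fact that $c\tau$ and $\bar\tau c$ are themselves colourings, whereas you spell out each subcase directly.
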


\begin{proof}
Let $S, S'\in V(SG_{n,k})$.  

If $S\nbo S'$, then $S$ and $S'$ cannot
both contain $m-1$.  Assume $m-1\notin S$.  Then $c\tau(S')\nbo
c\tau(S)=\bar\tau c(S)$ and $c\tau(S)=\bar\tau c(S)\nbo\bar\tau
c(S')$.  This shows $c\tau\nbo\bar\tau c$.

Assume $c\rho(S)\not\nbo\bar\rho c(S')$, i.e.\ $c\rho(S)=i=\bar\rho
c(S')$ for some $0\le i<k+2$.  If $i=k+1$ then
$S=S'=\set{k+1,k+3,\dots,k+2n-1}$.  If $0\le i<k$, then $k-i\in
S\intersect S'$.  In both cases $S\not\nbo S'$.  This shows
$c\rho\nbo\bar\rho c$.
\end{proof}

\begin{proof}[Proof of \prettyref{thm:sg}]
By a theorem of Braun~\cite{braun-sg}, every automorphism of
$SG_{n,k}$ is induced by a permutation of the base set which preserves
its cyclic adjacency relation.  Therefore the elements $\tau$ and
$\rho$ of \prettyref{lem:ds} generate $\Aut(SG_{n,k})$, and it 
suffices to show that $c\homot c\tau$ and $c\homot
c\rho$.  By \prettyref{lem:ds} this reduces to
$c\homot\bar\tau c$ and $c\homot\bar\rho c$.
By \prettyref{prop:sg} these will be true if $\bar\tau$ and $\bar\rho$
are even permutations.  Now $\sign \bar\tau=(-1)^{k+1}=1$ and
$\sign\bar\rho = (-1)^{\frac{k(k+1)}2}=1$.
\end{proof}


\end{document}